\newtheorem{theorem}{Theorem}%[section]
\newtheorem{lemma}[theorem]{Lemma}
\newtheorem{definition}[theorem]{Definition}
\newtheorem*{remark}{Remark}{\it}{\rm}
\newtheorem{proposition}[theorem]{Proposition}
\newtheorem*{bem}{Remark}{\it}{}
\numberwithin{equation}{section}
\newcommand{\Z}{{\mathbb Z}}
\newcommand{\lcm}{{\rm lcm}}
\newcommand{\R}{{\mathbb R}}
\newcommand{\C}{{\mathbb C}}
\newcommand{\tr}{{\rm tr}}
\begin{document}

\title[Petersson products]
{Petersson products of bases of spaces of cusp forms and estimates for
Fourier coefficients}  
\author[R. Schulze-Pillot]{Rainer Schulze-Pillot} 
\author[A. Yenirce]{Abdullah Yenirce}
%\thanks{MSC 2000: Primary 11E12, Secondary 11F27 11F30 11F46 11E45}
 \begin{abstract}
We prove a bound for the Fourier coefficients of a cusp form of
integral weight which is not a newform by computing an explicit
orthogonal basis for the space of cusp forms of given integral weight and level.
\end{abstract}

\maketitle

\section{Introduction.} 
The Fourier coefficients $a(F,n)$ of a cusp form $F$ of integral weight $k$ for
the group $\Gamma_0(M)$ are bounded from above by
$\sigma_0(n)n^\frac{k-1}{2}$ if $F$ is a primitive form (also called
normalized newform) by the famous Ramanujan-Petersson-Deligne
bound. For applications one often needs bounds for an arbitrary cusp
form which is a linear combination of old and new forms. Such bounds
have first been given in special cases in \cite{fomenko1,fomenko2}.
The first step for this is the construction of an explicit
orthogonal basis for the space $S_k(\Gamma_0(M), \chi)$. 
Starting from
the usual basis of translates of primitive forms and using the well
known fact that translates of
different primitive forms are pairwise orthogonal, one is left with
the task to orthogonalise the translates of the same primitive
form, in particular, one has to compute their Petersson scalar products.

Choie and Kohnen in \cite{choie-kohnen} and Iwaniec, Luo, Sarnak in
\cite{iwa_luo_sar} cover arbitrary integral weights, square free
level and  trivial character, using Rankin $L$-functions for the
computation of the Petersson products of translates of a primitive form. By the same
method, Rouymi \cite{rouymi} treated prime power level and trivial
character. His approach was generalized to arbitrary levels and
trivial character by Ng Ming Ho in his unpublished master thesis 
\cite{ngmingho}. Blomer and Mili\'{c}i\'{c} in \cite{blomer_milicic} treat
Maa\ss forms and holomorphic modular forms for arbitrary level and
trivial character by the same method.

In this note we investigate the case of arbitrary level and arbitrary
character  with a rather elementary 
approach. 
In order to compute the Petersson product of two translates of the
same primitive form we use in Section 2 the trace operator sending a form of level $M$ to a
form of level $N$ dividing $M$. Together with the well known fact that the $p$-th Hecke
operator on forms of level $N$ can be obtained by first translating the argument by a factor
$p$ and then applying the trace operator from level $Np$ down to level
$N$ this allows us to express the scalar products quite easily in
terms of Hecke eigenvalues of the underlying primitive form.

We can then in Section 3 use the Gram-Schmidt procedure to compute an orthogonal
basis of the space $S_k(\Gamma_0(M),\chi)$ of all cusp forms of some
given level $M$ and character $\chi$. The formulas for the Petersson
products we obtained and the relations between
the Hecke eigenvalues $\lambda_f(1,p^j)$ of a primitive form $f$ for
varying $j$ imply then that each element of this orthogonal basis
involves
only very few of the translates of its underlying primitive form, so
that is easy to compute the Petersson norms of these basis elements
and to estimate their Fourier coefficients.

For
forms of half integral weight we show in Section 4 that  our approach
works in essentially the 
same way as far as the computation of the Petersson product of a Hecke
eigenform with its translates  is concerned. However,  since the theory of
newforms of half integral weight is  completely known only for the Kohnen plus
space in square free level, we do for general level not know how large
the proportion
of the space of all cusp forms is
in which we can estimate the Fourier coefficients.

We then use in Section 5 in the integral weight case the orthogonal
basis to obtain  as our main result in Theorem \ref{fourier_estimate} 
an explicit bound for the Fourier coefficient $a(F,n)$ of an arbitrary
cusp form $F$ in terms of the Petersson norm $\langle
F,F \rangle$ and the level $M$.

In applications to the theory of integral quadratic forms it is
usually possible to compute or at least bound $\langle F,F \rangle$ for the cusp form
$F$ at hand (the difference between a genus theta series and a theta
series), so that our result is directly applicable to such problems;
this will be worked out separately.

An estimate for the Fourier coefficients in the half integral weight
case could in principle be obtained in the same way as in the integral
weight case discussed above as long as one has an explicit bound for
the Fourier coefficients with square free index of a Hecke
eigenform. Unfortunately most of the known estimates (see
\cite[Appendix 2]{blomer_michel_mao} involve
constants which are not explicitly known, and we prefer not to discuss
this possibility in detail in the present paper.

This article is an extension of work from the master thesis of the
second named author at Universität des Saarlandes, 2014.

After the first version of this article was posted in the matharxiv, Ng
Ming Ho sent us his master thesis \cite{ngmingho}, from which we also learnt of the
previous work of Iwaniec, Luo and Sarnak, and of  Rouymi.   
We thank Ng Ming Ho for providing this information to us. We also
thank the referee for his very detailed comments which eliminated
several embarrassing typing errors.
 \section{Trace operator and scalar products} 
Let $N\mid M$ be integers and let $\chi$ be a Dirichlet character modulo $N$; we denote the Dirichlet
character modulo $M$ induced by it by $\chi$ as well. We have induced characters on the groups 
$\Gamma_0(N)$, $\Gamma_0(M)$ given by
 \begin{equation*} 
\begin{pmatrix} a & b \\ c & d \end{pmatrix} \longmapsto \chi(d) 
 \end{equation*}
as usual and denote these again by $\chi$.

For an integer $k$ we denote by
$M_k(\Gamma_0(N),\chi),S_k(\Gamma_0(N),\chi)$ the spaces of modular
forms respectively cusp forms of weight $k$ and character $\chi$ for
the group $\Gamma_0(N)$. On $S_k(\Gamma_0(N),\chi)$ we consider the
Petersson inner product given by
\begin{equation*}
\langle f,g\rangle:=\langle f,g\rangle_{\Gamma_0(N)}:=\frac{1}{(SL_2(\Z):\Gamma_0(N))}\int_{\mathcal F}f(x+iy)\overline{g(x+iy)}y^{k-2}dxdy, 
\end{equation*}
where ${\mathcal F}$ is a fundamental domain for the action of
$\Gamma_0(N)$ on the upper half plane $H\subseteq \C$ by fractional
linear transformations. The normalization chosen implies that for
$N\mid M$ and $f,g \in S_k(\Gamma_0(N),\chi)\subseteq
S_k(\Gamma_0(M),\chi)$ we have  $\langle f,g
\rangle_{\Gamma_0(N)}=\langle f,g \rangle_{\Gamma_0(M)}$.

For $\gamma=\bigl(
\begin{smallmatrix}
  a&b\\c&d
\end{smallmatrix}\bigr)
\in GL_2(\R)$ with $\det(\gamma)>0$ we write as usual
$f\vert_k\gamma(z)=\det(\gamma)^{k/2}(cz+d)^{-k}f(\frac{az+b}{cz+d})$.
 
\medskip
We define trace operators as in \cite{kume,bfsp}: %(but with slightly different normalization):

 \begin{definition}
For $N\mid M$ and $\chi$ as above we put for $f \in M_k(\Gamma_0(M),\chi)$
\begin{equation}\label{trace_definition}  
 f|_k {\rm tr}_N^M = \frac{1}{(\Gamma_0(N):\Gamma_0(M))} \sum_{i} \overline{\chi(\alpha_i)} f\vert_k \alpha_i,
 \end{equation}
 where $\displaystyle \Gamma_0(N) = %{\underset{i}{\stackrel{\cdot}{\bigcup}}}
 \dot{\bigcup_i}\,\Gamma_0(M) \alpha_i$ is a disjoint coset
decomposition.
 \end{definition}

 \begin{lemma}
The definition above is independent of the choice of coset representatives. 
One has $f|_k {\rm tr}_N^M \in M_k(\Gamma_0(N),\chi)$ and $f|_k {\rm tr}_N^M \in S_k(\Gamma_0(N),\chi)$ if $f$ is
cuspidal.
 \end{lemma}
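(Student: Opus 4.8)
The plan is to derive all three assertions formally from the cocycle identity $f|_k(\gamma\gamma') = (f|_k\gamma)|_k\gamma'$ for the slash action, together with two elementary facts about $\chi$. First I would record these facts. Since $N \mid M$ we have $\Gamma_0(M) \subseteq \Gamma_0(N)$; and since every $\gamma = \bigl(\begin{smallmatrix} a & b \\ c & d\end{smallmatrix}\bigr) \in \Gamma_0(N)$ has $N \mid c$, the lower right entry of a product of two such matrices is congruent modulo $N$ to the product of the lower right entries. Hence $\gamma \mapsto \chi(\gamma) := \chi(d)$ is a homomorphism $\Gamma_0(N) \to \C^\times$ which restricts on $\Gamma_0(M)$ to the homomorphism used in the definition of forms with character $\chi$. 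Moreover $ad \equiv 1 \pmod{N}$ forces $\gcd(d,N) = 1$, so each value $\chi(\gamma)$ is a root of unity and $\overline{\chi(\gamma)} = \chi(\gamma)^{-1}$.

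Granting this, \emph{independence of the coset representatives} is immediate: replacing $\alpha_i$ by $\gamma_i\alpha_i$ with $\gamma_i \in \Gamma_0(M)$ gives $\chi(\gamma_i\alpha_i) = \chi(\gamma_i)\chi(\alpha_i)$ and $f|_k(\gamma_i\alpha_i) = \chi(\gamma_i)(f|_k\alpha_i)$, so $\overline{\chi(\gamma_i\alpha_i)}\,f|_k(\gamma_i\alpha_i) = |\chi(\gamma_i)|^2\,\overline{\chi(\alpha_i)}\,f|_k\alpha_i = \overline{\chi(\alpha_i)}\,f|_k\alpha_i$, i.e. every summand is unchanged. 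For the \emph{transformation law} under $\delta \in \Gamma_0(N)$, I would use that right multiplication by $\delta$ permutes the cosets $\Gamma_0(M)\alpha_i$: there is a permutation $\sigma$ with $\alpha_i\delta = \gamma_i\alpha_{\sigma(i)}$, $\gamma_i \in \Gamma_0(M)$. Applying $\chi$ gives $\chi(\gamma_i) = \chi(\alpha_i)\chi(\delta)\overline{\chi(\alpha_{\sigma(i)})}$, and then $\overline{\chi(\alpha_i)}\,(f|_k\alpha_i)|_k\delta = \overline{\chi(\alpha_i)}\,f|_k(\alpha_i\delta) = \chi(\delta)\,\overline{\chi(\alpha_{\sigma(i)})}\,f|_k\alpha_{\sigma(i)}$; summing over $i$ and reindexing by $\sigma$ yields $(f|_k{\rm tr}_N^M)|_k\delta = \chi(\delta)\,(f|_k{\rm tr}_N^M)$, which is exactly the invariance property required of a form in $M_k(\Gamma_0(N),\chi)$.

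It remains to check holomorphy on $H$ and at the cusps. On $H$ the form $f|_k{\rm tr}_N^M$ is a finite linear combination of the holomorphic functions $f|_k\alpha_i$, hence holomorphic. As $\bigl(\begin{smallmatrix} 1 & 1 \\ 0 & 1\end{smallmatrix}\bigr) \in \Gamma_0(N)$ has $\chi$-value $1$, the transformation law just established shows $f|_k{\rm tr}_N^M$ is $1$-periodic and therefore has a Fourier expansion. For an arbitrary cusp of $\Gamma_0(N)$, represented by some $\beta \in SL_2(\Z)$, we have $(f|_k{\rm tr}_N^M)|_k\beta = \frac{1}{(\Gamma_0(N):\Gamma_0(M))}\sum_i \overline{\chi(\alpha_i)}\,f|_k(\alpha_i\beta)$ with $\alpha_i\beta \in SL_2(\Z)$; since $f \in M_k(\Gamma_0(M),\chi)$, each $f|_k(\alpha_i\beta)$ is holomorphic at $\infty$, and vanishes at $\infty$ when $f$ is a cusp form. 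Therefore $f|_k{\rm tr}_N^M \in M_k(\Gamma_0(N),\chi)$, lying in $S_k(\Gamma_0(N),\chi)$ if $f$ is cuspidal.

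The only point requiring some care is the bookkeeping with the character: checking that $\chi$ is multiplicative and unitary on $\Gamma_0(N)$, and tracking the complex conjugates correctly through the coset permutation in the transformation law. Everything else is a routine consequence of the cocycle property of the weight-$k$ slash action, so I do not expect any genuine obstacle.
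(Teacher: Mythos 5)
Your proposal is correct and carries out precisely the routine verification (multiplicativity and unitarity of $\chi$ on $\Gamma_0(N)$, coset-independence, the coset-permutation argument for the transformation law, and holomorphy at the cusps via $f|_k(\alpha_i\beta)$) that the paper does not write out but merely delegates to the reference \cite{bfsp}. There is no gap; the approach is the standard one and matches what the cited source does.
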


 \begin{proof} 
This is a routine calculation, see e.g. \cite[Proposition 2.1]{bfsp}.
 \end{proof}

 \begin{lemma} \label{trace-skp}
With notations as above one has for $f \in S_k(\Gamma_0(N),\chi)$, $g \in S_k(\Gamma_0(M),\chi)$:
 \begin{equation*} 
 \langle f,g \rangle = \langle f,g~|_k ~{\rm tr}_N^M \rangle,
 \end{equation*}
where the Petersson product on the left hand side is with respect to
$\Gamma_0(M)$ and that on the right hand side is with respect to $\Gamma_0(N)$.
 \end{lemma}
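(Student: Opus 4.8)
The plan is to unfold the definition of $\mathrm{tr}_N^M$ and to recognize both sides as the same integral of $f\bar{g}y^{k}$ against the $SL_2(\R)$-invariant measure $d\mu=y^{-2}\,dx\,dy$, exploiting that a weight-$k$ slash combines with the automorphy factor exactly so as to be compensated by the invariance of $d\mu$. Fix the coset decomposition $\Gamma_0(N)=\bigsqcup_i\Gamma_0(M)\alpha_i$ from the definition; each $\alpha_i$ lies in $\Gamma_0(N)$. If $\mathcal{F}$ is a fundamental domain for $\Gamma_0(N)$, then $\mathcal{F}_M:=\bigsqcup_i\alpha_i\mathcal{F}$ is a fundamental domain for the subgroup $\Gamma_0(M)$ (note the direction: the smaller group gets the larger domain). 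Writing $y^{k-2}\,dx\,dy=y^{k}\,d\mu$ and substituting $g|_k\mathrm{tr}_N^M=\frac{1}{[\Gamma_0(N):\Gamma_0(M)]}\sum_i\overline{\chi(\alpha_i)}\,g|_k\alpha_i$ turns $\langle f,\,g|_k\mathrm{tr}_N^M\rangle_{\Gamma_0(N)}$ into $\frac{1}{[SL_2(\Z):\Gamma_0(N)]\,[\Gamma_0(N):\Gamma_0(M)]}\sum_i\chi(\alpha_i)\int_{\mathcal{F}}f(z)\,\overline{(g|_k\alpha_i)(z)}\,y^{k}\,d\mu$.

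In the $i$-th summand I would carry out the change of variables $w=\alpha_i z$. Using invariance of $d\mu$, the cocycle relation $j(\alpha_i,\alpha_i^{-1}w)\,j(\alpha_i^{-1},w)=1$ for the automorphy factor $j(\gamma,z)=cz+d$ (here $\det\alpha_i=1$, so no determinant factor intervenes), the identity $\mathrm{Im}(\alpha_i^{-1}w)=\mathrm{Im}(w)\,|j(\alpha_i^{-1},w)|^{-2}$, and the transformation law $f(\alpha_i^{-1}w)=\chi(\alpha_i^{-1})\,j(\alpha_i^{-1},w)^{k}\,f(w)$ (valid because $\alpha_i^{-1}\in\Gamma_0(N)$ and $f$ has character $\chi$ there), one checks that all automorphy and Jacobian factors cancel, so the summand equals $\overline{\chi(\alpha_i)}\int_{\alpha_i\mathcal{F}}f(w)\,\overline{g(w)}\,(\mathrm{Im}\,w)^{k}\,d\mu$; here one uses $\chi(\alpha_i^{-1})=\overline{\chi(\alpha_i)}$, valid since $\gamma\mapsto\chi(d_\gamma)$ ($d_\gamma$ the lower-right entry) is a unitary character of $\Gamma_0(N)$. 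Now $\chi(\alpha_i)\overline{\chi(\alpha_i)}=1$, the integrand $f\bar{g}(\mathrm{Im})^{k}$ is $\Gamma_0(M)$-invariant since $f,g\in S_k(\Gamma_0(M),\chi)$, and $\mathcal{F}_M=\bigsqcup_i\alpha_i\mathcal{F}$; hence the sum reassembles into $\frac{1}{[SL_2(\Z):\Gamma_0(M)]}\int_{\mathcal{F}_M}f\bar{g}\,y^{k-2}\,dx\,dy=\langle f,g\rangle_{\Gamma_0(M)}$, using $[SL_2(\Z):\Gamma_0(N)]\,[\Gamma_0(N):\Gamma_0(M)]=[SL_2(\Z):\Gamma_0(M)]$.

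I anticipate no genuine obstacle: this is the standard adjointness of the trace operator with respect to the Petersson product, and the references cited carry out the computation in the trivial-character case. The only delicate points are conventional---orienting the fundamental-domain refinement the right way, transporting the automorphy factors correctly through $w=\alpha_i z$, and keeping the conjugates on the $\chi(\alpha_i)$ consistent so that they cancel rather than reinforce. The normalization of the Petersson product is chosen precisely so that the index factors match; alternatively one could first invoke the identity $\langle f,g\rangle_{\Gamma_0(N)}=\langle f,g\rangle_{\Gamma_0(M)}$ for forms in $S_k(\Gamma_0(N),\chi)$ recorded above, applied to $f$ and $g|_k\mathrm{tr}_N^M$, and then only verify $\langle f,\,g|_k\mathrm{tr}_N^M\rangle_{\Gamma_0(M)}=\langle f,g\rangle_{\Gamma_0(M)}$, which is the same unfolding carried out at level $M$.
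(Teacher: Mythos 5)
Your proof is correct and is essentially the paper's argument written out in full: the paper compresses your change-of-variables computation into the single identity $\langle f,\overline{\chi(\alpha_i)}\,g|_k\alpha_i\rangle=\langle \chi(\alpha_i)f|_k\alpha_i^{-1},g\rangle=\langle f,g\rangle$, i.e.\ unitarity of the slash action plus the transformation of $f$ under $\Gamma_0(N)$ with character $\chi$, which is exactly what your unfolding over $\bigcup_i\alpha_i\mathcal{F}$ establishes. The orientation of the fundamental-domain refinement, the cancellation of the character values, and the matching of the index normalizations are all handled correctly.
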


 \begin{proof}
One has  $\langle f,\overline{\chi}(\alpha_i)g|_k \alpha_i \rangle %&
 = \langle \chi(\alpha_i)f|_k \alpha_i^{-1}),g \rangle%\\
 %&
= \langle f,g\rangle$,
% for $\alpha_i \in \Gamma_0(N)$:
%  \begin{equation*} 
% %\begin{align*}
%  \langle f,\overline{\chi}(\alpha_i)g|_k \alpha_i \rangle %&
%  = \langle \chi(\alpha_i)f|_k \alpha_i^{-1}),g \rangle%\\
%  %&
% = \langle f,g\rangle ,
%  %\end{align*}
% \end{equation*}
which implies the assertion.
 \end{proof}

\newpage
 \begin{definition}
 Let $\gcd(\ell,N) = 1$.
 \begin{itemize} 
 \item[a)] With $\delta_{\ell} := \begin{pmatrix} \ell & 0 \\ 0 & 1 \end{pmatrix} \in {\rm GL}_2^+(\mathbb Q)$ we put
 \begin{equation*} 
 f|_k V_{\ell}(z) := f(\ell z) = \ell^{-k/2} f|_k \delta_{\ell} (z)
 \end{equation*}
for $f \in M_k(\Gamma_0(N),\chi)$.
 \item[b)] For $\ell \mid m$ we denote by $T_N(\ell,m)$ the Hecke operator given by the double coset $\Gamma_0(N) \begin{pmatrix} \ell & 0 \\ 0 & m\end{pmatrix} \Gamma_0(N)$.
 \item[c)] For $\ell\mid m$ we denote by $T^{\ast}_N(m,\ell)$ the Hecke operator given by the double coset 
$\Gamma_0(N) \begin{pmatrix} m & 0\\ 0 & \ell \end{pmatrix} \Gamma_0(N)$.
 \end{itemize}
 \end{definition}

 \begin{remark}
 \begin{itemize}
 \item[a)] It is well-known (see \cite[\S 4.5]{Miy}) that on spaces of cusp forms of level $N$ the
operator $T^{\ast}(m,\ell)$ is adjoint to $T(m,\ell)$ with respect to the Petersson inner product.
 \item[b)] As usual we write
 \begin{equation*} 
 T_N(n) = \sum_{\ell m = n}T_N(\ell,m),\: T_N^{\ast}(n) = \sum_{\ell m = n} T_N^{\ast} (m,\ell).
 \end{equation*}
 \end{itemize}
 \end{remark}

 \begin{lemma} \label{trace-hecke} 
Let $f \in S_k(\Gamma_0(N),\chi)$ and $d \in \mathbb N$. Then
 \begin{equation} \label{trace_hecke_formula}
  (\Gamma_0(N):\Gamma_0(Nd))(f|_k V_d)~|_k~{\rm tr}_N^{Nd} = \frac{1}{d^{k-1}} f|_k T_N^{\ast}(d,1).
 \end{equation}
 \end{lemma}

 \begin{proof}
Putting $\Gamma_0(N) = {\underset{i}{\stackrel{\cdot}{\bigcup}}} \Gamma_0(Nd)\alpha_i$ we have 
(using $\delta_d^{-1} \Gamma_0(N){\delta_d} \cap \Gamma_0(N) =
\Gamma_0(Nd)$ and the proof of Proposition 3.1 of \cite{shimura_book}):
 %\begin{equation*}
 \begin{align*}
 f|_k T_N^{\ast}(d,1)& = d^{k-1} \sum_i \overline{\chi(\alpha_i)} (d^{-k/2}f|_k \delta_d)|_k \alpha_i\\
 & = d^{k-1}\sum_i \overline{\chi(\alpha_i)} (f|_k V_d)|_k \alpha_i\\
 &=(\Gamma_0(N):\Gamma_0(Nd)) d^{k-1}(f|_k V_d)|_k {\rm tr}_N^{Nd}.
 \end{align*}
% \end{equation*}
 \end{proof}

 \begin{theorem}\label{gram_matrix_theorem}
Let $f \in S_k(\Gamma_0(N),\chi)$ be a primitive form, let $m,n \in \mathbb N$ with $\gcd(m,n) = d$. 
Then
 \begin{equation}\label{gram_matrix_formula}
\langle f|_k V_m,f|_kV_n\rangle  = \frac{\lambda(1,\frac{n}{d})\overline{\lambda(1,\frac{m}{d})}}
{ (\frac{mn}{d})^k \underset{p|\frac{mn}{d^2} \atop p\nmid N}{\prod} (1+\frac{1}{p})} \langle f,f \rangle ,
 \end{equation}
where we denote by $\lambda(1,\frac{n}{d})$ the $T(1,\frac{n}{d})$-eigenvalue of $f$ (and analogously
for $\lambda(1,\frac{m}{d})$).
 \end{theorem}

 \begin{proof}
 Since we have
% \begin{equation*}
 \begin{align*}
 \langle f|_k V_m, f|_k V_n \rangle &= \langle f|_k V_{m/d} |_k V_d,f|_k V_{n/d}|_k V_d \rangle\\
 &=d^{-k} \langle f|_k V_{m/d}|_k \delta_d,f|_k V_{n/d}|_k \delta_d \rangle \\
 &= d^{-k} \langle f|_k V_{m/d},f|_k V_{n/d}\rangle,
 \end{align*}
 %\end{equation*}
we can restrict attention to the case
 \begin{equation*}
d= \gcd(m,n) = 1.
 \end{equation*}
In that case we have 
 %\begin{equation*}
\begin{align*}
 & \langle f|_k V_m,f|_k V_n \rangle = \langle f|_k V_m,f|_k V_n ~|~ {\rm tr}_{mN}^{mnN} \rangle\\
&= \frac{1}{(\Gamma_0(mN):\Gamma_0(mnN))} \frac{1}{n^{k-1}} \langle f|_k V_m, f|_k T_{mN}^{\ast} (n,1) \rangle,
 \end{align*}
% \end{equation*}
where we used Lemma \ref{trace-skp} and Lemma \ref{trace-hecke}.
 \medskip
We split $n$ as $n=\tilde{n}n'$ with $\gcd(\tilde{n},N)=1$ and $n'|N^{\infty}$ (i.e., $n'$ is divisible 
only by primes dividing $N$) and have 
 %\begin{equation*}
\begin{align*}
 T_{mN}^{\ast} (n,1) &= T_{mN}^{\ast}(\tilde{n},1) T_{mN}^{\ast}(n',1),\\
f|_k T_{mN}^{\ast}(\tilde{n},1) &= f|_kT_N^{\ast}(\tilde{n},1)\\
&= \overline{\lambda(1,\tilde{n})} f,
 \end{align*}
% \end{equation*}
since $T_N^{\ast}(\tilde{n},1)$ is adjoint to
$T_N^{\ast}(1,\tilde{n})$.

In the same way we see 
 %\begin{equation*}
 \begin{align*}
 f|_k T_{mN}^{\ast} (n',1)&= f|_k T_N^{\ast}(n',1)\\
 &=\overline{\lambda(1,n')}f.
 \end{align*}
% \end{equation*}
This gives us
 %\begin{equation*}
 \begin{align*}
 \langle f|_k V_m,f|_k V_n\rangle &= \frac{1}{n^{k-1}(\Gamma_0(mN): \Gamma_0(mnN))} \cdot
   \lambda(1,\tilde{n}) \lambda (1,n') \langle f|_k V_m,f \rangle\\
 &= \frac{\lambda(1,n)}{n^{k-1}(\Gamma_0(mN):\Gamma_0(mnN))} \overline{\langle f,f|_k V_m \rangle}.
 \end{align*}
% \end{equation*}
In particular, we get
 \begin{equation*}
 \langle f,f|_k V_m\rangle = \frac{\lambda(1,m)}{(\Gamma_0(N):\Gamma_0(mN))m^{k-1}}
  \overline{\langle f,f \rangle},
 \end{equation*}
and thus (computing the group index in the denominator )
 %\begin{equation*}
 \begin{align*}
 \langle f|_k V_m,f|_k V_n\rangle &= \frac{\lambda(1,n)\overline{\lambda(1,m)}}{(mn)^{k-1}(\Gamma_0(N):\Gamma_0(mN))}
 \langle f,f \rangle\\
 &= \frac{\lambda(1,n) \overline{\lambda(1,m)}}{(mn)^k \underset{p|mn
   \atop p\nmid N}{\prod} (1+\frac{1}{p})}
  \langle f,f \rangle
 \end{align*}
% \end{equation*}
as asserted.
 \end{proof}

  \section{Orthogonal bases for spaces of cusp forms}
The formulas for the Petersson products derived in the previous
section allow to construct an orthogonal basis by Gram Schmidt
orthogonalization. As we learnt from Ng Ming Ho after version one of
this article was posted, this has been done for trivial character in
\cite{rouymi} for prime power level 
and in \cite{ngmingho} for general level. For the sake of completeness
and since \cite{ngmingho} is at present not published we give here our
version of it.

\smallskip
We recall first the well-known fact (see e.g. \cite[Lemma 4.6.9]{Miy} that the space $S_k(\Gamma_0(M),\chi)$ 
has a basis consisting of the $f|_{V_{\ell}}$, where $f$ runs over the primitive forms (normalized Hecke
eigenforms) of levels $N\mid M$ where $N$ is divisible by the conductor of $\chi$, and where $\ell$ is
a positive integer such that $\ell N$ divides $M$. We will call this basis the basis of translates of
newforms.

 \begin{lemma}\label{product_decomposition-Lemma}
Let $f\in S_k(\Gamma_0(N,\chi))$ be a primitive form, let $m_1,m'_1,m_2,m'_2$ be positive integers with
$ \gcd(m_1m'_1,m_2m'_2)=1$, and
put $\tilde{f} = \frac{f}{\sqrt{\langle f,f \rangle}}$. Then
% \begin{equation*}
 \begin{align*}
  \langle \tilde{f}|_k V_{m_1},\tilde{f}|_k V_{m'_1} \rangle \cdot \langle \tilde{f}|_k V_{m_2},\tilde{f}|_k V_{m'_2} 
  \rangle \\
  = \langle \tilde{f}|_k V_{m_1m_2},\tilde{f}|_k V_{m'_1m'_2} \rangle\,.
 \end{align*}
% \end{equation*}
 \end{lemma}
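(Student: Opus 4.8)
The plan is to reduce everything to the explicit formula of Theorem~\ref{gram_matrix_theorem} and check that it is multiplicative in the required sense. Write $d_1 = \gcd(m_1, m_1')$ and $d_2 = \gcd(m_2, m_2')$. Since $\gcd(m_1 m_1', m_2 m_2') = 1$, the integers $m_1 m_2$ and $m_1' m_2'$ have greatest common divisor exactly $d_1 d_2$, and moreover $m_1/d_1, m_1'/d_1$ are coprime to $m_2/d_2, m_2'/d_2$. Because $\tilde f$ is normalized, $\langle \tilde f, \tilde f\rangle = 1$, so Theorem~\ref{gram_matrix_theorem} gives
\begin{equation*}
\langle \tilde f|_k V_{m_i}, \tilde f|_k V_{m_i'}\rangle = \frac{\lambda(1, m_i'/d_i)\,\overline{\lambda(1, m_i/d_i)}}{(m_i m_i'/d_i)^k \prod_{p \mid m_i m_i'/d_i^2,\ p \nmid N}(1 + 1/p)}
\end{equation*}
for $i = 1, 2$, and likewise for the pair $(m_1 m_2, m_1' m_2')$ with $d = d_1 d_2$.

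Next I would compare the two sides factor by factor. The prefactor $(m_i m_i'/d_i)^k$ is completely multiplicative in the relevant variables, so the product of the two denominators' power terms equals $(m_1 m_1' m_2 m_2'/(d_1 d_2))^k = ((m_1 m_2)(m_1' m_2')/d)^k$. For the Euler-product factor one uses that the set of primes dividing $(m_1 m_2)(m_1' m_2')/d^2$ is the disjoint union of those dividing $m_1 m_1'/d_1^2$ and those dividing $m_2 m_2'/d_2^2$ (disjointness being exactly the coprimality hypothesis), so the products over $p \nmid N$ multiply correctly. Finally the eigenvalue factors: here I would invoke the multiplicativity of the Hecke eigenvalues of a primitive form, namely $\lambda(1, ab) = \lambda(1, a)\lambda(1, b)$ whenever $\gcd(a, b) = 1$, applied to $a = m_1'/d_1$, $b = m_2'/d_2$ and to $a = m_1/d_1$, $b = m_2/d_2$, together with the fact that $\gcd(m_1'/d_1, m_2'/d_2) = 1$ and $\gcd(m_1/d_1, m_2/d_2) = 1$. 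This yields $\lambda(1, (m_1' m_2')/d) = \lambda(1, m_1'/d_1)\lambda(1, m_2'/d_2)$ and similarly for the conjugated factor, matching the product of the left-hand sides.

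I do not expect a serious obstacle; the whole statement is bookkeeping around Theorem~\ref{gram_matrix_theorem}. The one point that deserves care is the verification that $\gcd(m_1 m_2, m_1' m_2') = d_1 d_2$ and that the prime sets for the Euler factor genuinely split as a disjoint union — both follow cleanly from $\gcd(m_1 m_1', m_2 m_2') = 1$, but it is worth spelling out that coprimality of the products forces $\gcd(m_1, m_2') = \gcd(m_1', m_2) = 1$ as well, which is what makes the cross terms disappear. Everything else is the multiplicativity of $n \mapsto n^k$ and of the Hecke eigenvalues $\lambda(1, \cdot)$ at coprime arguments.
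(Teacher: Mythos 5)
Your proposal is correct and follows exactly the route the paper intends: the paper's own proof is the single line ``This follows directly from the theorem above,'' and your argument is precisely the bookkeeping (multiplicativity of $\gcd$, of $n\mapsto n^k$, of the Euler factor over disjoint prime sets, and of $\lambda(1,\cdot)$ at coprime arguments) that makes that one-liner rigorous. No gaps; the key observation that $\gcd(m_1m_1',m_2m_2')=1$ forces $\gcd(m_1m_2,m_1'm_2')=d_1d_2$ and splits the prime set is exactly what is needed.
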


 \begin{proof}
This follows directly from the theorem above.
 \medskip
\end{proof}
It is well-known that for primitive forms $f \not= g$ all translates of $f$ by some $V_{m'}$
are orthogonal to all translates of $g$ by some $V_{m'}$. Our lemma above shows that for a primitive form
$f \in S_k(\Gamma_0(N),\chi)$ for some $N\mid M$ the space of translates of $f$ in $S_k(\Gamma_0(M),\chi)$ is 
isometric (with respect to Petersson norms) to the tensor products of the spaces $W_{p_i}{(f)}$ for the
$p_i|\frac{M}{N}$ consisting of $p_i$-power-translates of $f$. An isometry is given by the unique linear
map with
 \begin{equation*} 
 \tilde{f}|_k{V_{p_1^{r_1}}} \otimes \dots \otimes \tilde{f}|_k{V_{p_t^{r_t}}} \mapsto
 \tilde{f}|_k{V_{p_1^{r_1} \dots p_t^{n_t}}},
 \end{equation*}
where $\displaystyle \tilde{f} = \frac{f}{\sqrt{\langle f,f \rangle}}$.
 \medskip

To construct an orthogonal basis for  $S_k(\Gamma_0(M),\chi)$ it suffices therefore to do that for
each space $W_{p_i}(f)$.

 \begin{theorem}\label{ogbasis_prime}
Let $f \in S_k(\Gamma_0(N),\chi)$ be a primitive form, put  $\tilde{f}=\frac{f}{\sqrt{\langle f,f \rangle}}$,
let $p$ be a prime number, $r \in \mathbb N$, let $W_{p,r}(f)$ be the space generated by
$f,f|_k{V_p},\ldots,f|_k{V_{p^r}}$.
 \medskip

%Then $W_{p,r}(f)$ has an orthogonal basis consisting of 
 \begin{itemize}
 \item[a)] If $p\mid N$ the space $W_{p,r}(f)$ has an orthogonal basis consisting of
   \begin{equation}\label{basisvectors1}
      g_0 = \tilde{f}, \quad g_j = p^{jk/2}(\tilde{f}|_k{V_{p^j}} - \frac{\overline{\lambda(1,p)}}{p^k} 
 \tilde{f}|_k{V_{p^{j-1}}} )
 \text{ for } 1 \leq j \leq r
   \end{equation}
with 
\begin{eqnarray*}
\langle g_0,g_0 \rangle& =& 1,\\
\langle g_j,g_j \rangle& =&1-\frac{|\lambda(1,p)|^2}{p^k}
\text{ for }1 \leq j \leq r.
\end{eqnarray*}
 \item[b)] If $p\nmid N$ the space $W_{p,r}(f)$ has an orthogonal basis consisting of
   \begin{eqnarray}\label{basisvectors2}
     g_0 &=& \tilde{f},\nonumber\\
     g_1&=& p^{k/2} \tilde{f}|_k V_p -  \frac{\overline{\lambda(1,p)}}
{p^{k/2}(1+\frac{1}{p})} \tilde{f},\nonumber\\
g_j &=& p^{jk/2}(\tilde{f}|_k V_{p^j} - \frac{\overline{\lambda(1,p)}}{p^k}
    \tilde{f}|_k V_{p^{j-1}}+ \frac{\overline{\chi(p)}}{p^{k+1}}
        \tilde{f} |_k V_{p^{j-2}}) %\text{ for }
        %\,(2 \leq j \leq r)
   \end{eqnarray}
for $2 \leq j \leq r$, with 
\begin{eqnarray*}
\langle g_0,g_0 \rangle &=&1,\\
\langle g_1,g_1 \rangle &=&  1-\frac{|\lambda(1,p)|^2}
 {p^k(1+\frac{1}{p})^2},\\
\langle g_j,g_j \rangle &=& (1-\frac{1}{p^2})(1- \frac{|\lambda(1,p)|^2}{p^k(1+\frac{1}{p})^2})
\quad (2 \leq j \leq r). 
\end{eqnarray*}
 \end{itemize}
 \end{theorem}
 
 \begin{proof}
a) In the case $p|N$ we have by Theorem \ref{gram_matrix_theorem}
for $0 \leq i \leq j \leq r$:
% \begin{equation*}
 \begin{align*}
 \langle \tilde{f}|_k V_{p^i}, \tilde{f}|_k V_{p^j} \rangle &= p^{-ik} \langle \tilde{f},\tilde{f}|_k V_{p^{j-i}} \rangle\\
 &= p^{-ik} \frac{\lambda(1,p^{j-i})}{p^{(j-i)k}}\\
 &=p^{-jk}\lambda(1,p^{j-i})
 \end{align*}
% \end{equation*}
This gives for $1 \leq j \leq r$
 %\begin{equation*}
 \begin{align*}
 \langle g_0,g_j \rangle &= p^{jk/2} \langle \tilde{f},\tilde{f}|_k V_{p^j} - \frac{\lambda(1,p)}{p^k}
  \tilde{f}|_k V_{p^{j-1}}\rangle\\
&= p^{jk/2} (\frac{{\lambda(1,p^j)}}{p^{jk}} - \frac{{\lambda(1,p)} {\lambda(1,p^{j-1})}}
 {p^k p^{(j-1)k}})\\
 &= 0\, ,
\end{align*}
% \end{equation*}
because of $\lambda(1,p) \lambda(1,p^{j-1}) = \lambda(1,p^j)$ for $p|M$.
 \medskip

Similarly, we see for $1 \leq i < j \leq r$ 
 %\begin{equation*}
 \begin{align*}
 \langle g_i,g_j \rangle &= p^{(i+j)k/2} \langle \tilde{f}|_k V_{p^i} - \frac{\overline{\lambda(1,p)}}{p^k} 
 \tilde{f}|_k V_{p^{i-1}},\tilde{f}|_k V_{p^j}-\frac{\overline{\lambda(1,p)}}{p^k} \tilde{f}|_k V_{p^{j-1}} \rangle\\
 =& p^{(i+j)k/2}(p^{-jk} \lambda(1,p^{j-i})+ \frac{|\lambda(1,p)|^2}{p^{2k}} p^{-(j-1)k} \lambda(1,p^{j-i})\\
 &- \frac{\overline{\lambda(1,p)}}{p^k} p^{-jk} \lambda(1,p^{j-i+1})-
   \frac{\lambda(1,p)}{p^k} p^{-(j-1)k} \lambda(1,p^{j-i-1}))\\ 
  =& 0
 \end{align*}
% \end{equation*}
because of $\lambda(1,p) \lambda(1,p^{j-i-1}) = \lambda(1,p^{j-i})$ and
$\overline{\lambda(1,p)} \lambda(1,p^{j-1+1}) = \overline{\lambda(1,p)} \lambda(1,p) \lambda(1,p^{j-i})
= |\lambda(1,p)|^2 \lambda(1,p^{j-i})$.
 \medskip

Finally we have for $1 \leq j \leq r$
 %\begin{equation*}
 \begin{align*}
 \langle g_j,g_j \rangle& = p^{jk} \langle \tilde{f}|_k V_{p^j}-\frac{\overline{\lambda(1,p)}}{p^k}
  \tilde{f} |_kV_{p^{j-1}}, \tilde{f}|_k V_{p^j} - \frac{\lambda(1,p)}{p^k} \tilde{f}|_k V_{p^{j-1}} \rangle\\
 & = p^{jk}(p^{-jk}+ \frac{p^{-(j-1)k}}{p^{2k}} |\lambda(1,p)|^2-\frac{\overline{\lambda(1,p)} p^{-jk}}{p^k}
  \lambda(1,p) -\frac{\lambda(1,p)}{p^k} p^{-jk} \overline{\lambda(1,p)}) \\
 & = (1- \frac{|\lambda(1,p)|^2}{p^k})
 \end{align*}
% \end{equation*}
b) Consider now the case $p \nmid N$. From Theorem \ref{gram_matrix_theorem} we have for $0 \leq i < j \leq r$
 \begin{equation*}
 \langle \tilde{f}|_k V_{p^i},\tilde{f}|_k V_{p^j} \rangle = \frac{\lambda(1,p^{j-i})}{p^{jk}(1+\frac{1}{p})}
\end{equation*}
and 
 \begin{equation*}
\langle \tilde{f}|_k V_{p^j}, \tilde{f}|_k V_{p^j}\rangle = \frac{1}{p^{jk}}.
 \end{equation*}
From \cite[Lemma 4.5.7]{Miy} we have $\lambda(1,p^2) = \lambda(1,p)^2-(p+1)p^{k-2}\chi(p)$ and 
$\lambda(1,p^j) = \lambda(1,p)\lambda(1,p^{j-1})-p^{k-1}\chi(p) \lambda(1,p^{j-2})$ for $j \geq 3$.
\medskip

This gives us first
 %\begin{equation*}
 \begin{align*}
 \langle g_0,g_1 \rangle &= p^{k/2} \langle \tilde{f},\tilde{f}|_k V_p \rangle - \frac{\lambda(1,p)}
 {p^{k/2}(1+\frac{1}{p})}\\
&=0\,.
 \end{align*}
For $i \geq 1$ we get
 \begin{align*}
p^{-(i+1)k/2} \langle \tilde{f}|_k V_{p^i},g_{i+1} \rangle  =& \langle \tilde{f}|_k V_{p^i},\tilde{f}|_k V_{p^{i+1}} \rangle
 - \frac{\lambda(1,p)} {p^k}\langle \tilde{f}|_k V_{p^i},\tilde{f}|_k V_{p^i} \rangle \\ 
 & \quad +\frac{\chi(p)}{p^{k+1}} \langle \tilde{f}|_k V_{p^i},\tilde{f}|_k V_{p^{i-1}} \rangle \\
 =& \frac{\lambda(1,p)}{p^{(i+1)k}(1+{\frac{1}{p}})} -\frac{\lambda(1,p)}{p^k} \frac{1}{p^{ik}} +
   \frac{\chi(p)}{p^{k+1}} \frac{\overline{\lambda(1,p)}}{p^{ik}(1+\frac{1}{p})}\\
 =& 0
\end{align*}
(using $\chi(p) \overline{\lambda(1,p)}=\lambda(1,p)$, see \cite[Theorem 4.5.4]{Miy}).

\medskip
For $0 \leq i < j \leq r$ with $j \geq 2+i$ we obtain
 \begin{align*}
p^{-jk/2} \langle \tilde{f}|_k V_{p^i},g_j \rangle &= \langle \tilde{f}|_k V_{p^i}, \tilde{f}|_k V_{p^j} \rangle 
 -\frac{\lambda(1,p)}{p^k} \langle \tilde{f}|_k V_{p^i}, \tilde{f}|_k V_{p^{j-1}} \rangle \\
 &\quad \quad+ \frac{\chi(p)}{p^{k+1}} \langle \tilde{f}|_k V_{p^i},\tilde{f}|_k V_{p^{j-2}} \rangle \\
 &= \frac{\lambda(1,p^{j-i})}{p^{jk}(1+\frac{1}{p})} - \frac{\lambda(1,p)\lambda(1,p^{j-i-1})}
  {p^k p^{(j-1)k}(1+\frac{1}{p})}
 + \frac{\chi(p) \lambda(1,p^{j-i-2})}{p^{k+1}p^{(j-2)k}(1+\frac{1}{p})}\\
 &=0 \,.
  \end{align*}
Taken together we see that the $g_i$ form an orthogonal basis, it remains to compute the $\langle g_i,g_i\rangle$.\\
For this, $\langle g_0,g_0\rangle = 1$ is clear.

 \medskip
Next, we have 
 \begin{align*}
\langle g_1,g_1\rangle &= \langle g_1,p^{k/2} \tilde{f}|_k V_p \rangle \\
&= p^k \langle \tilde{f}|_k V_p,\tilde{f}|_k V_p\rangle -p^{k/2} \cdot
  \frac{\overline{\lambda(1,p)}}{p^{k/2}(1+\frac{1}{p})} \langle \tilde{f},\tilde{f}|_k V_p \rangle\\
&= 1-\frac{\overline{\lambda(1,p)}}{(1+\frac{1}{p})} \cdot \frac{\lambda(1,p)}{p^k(1+\frac{1}{p})}\\
&= 1-\frac{|\lambda(1,p)|^2}{p^k(1+\frac{1}{p})^2}.
  \end{align*}

For $j \geq 2 $ we already know that  $g_j$ is orthogonal to the
$f|_kV_{p^i}$ with $i<j$ and see
 \begin{align*}
\langle g_j,g_j \rangle =& \langle g_j,p^{jk/2} \tilde{f}|_k V_{p^j} \rangle \\
 =&p^{jk} \langle \tilde{f}|_k V_{p^j}, \tilde{f}|_k V_{p^j} \rangle 
 - \frac{\overline{\lambda(1,p)}}{p^k} p^{jk} \langle \tilde{f}|_k V_{p^{j-1}},\tilde{f}|_k V_{p^j} \rangle \\
 &+ \quad \frac{\overline{\chi(p)}p^{jk}\langle \tilde{f}|_k V_{p^{j-2}},\tilde{f}|_k V_{p^j} \rangle} {p^{k+1}}\\
 =& 1-\frac{|\lambda(1,p)|^2}{p^k(1+\frac{1}{p})} + \frac{\overline{\chi(p)} \lambda(1,p^2)}
  {p^{k+1}(1+\frac{1}{p})}.% \\
 % =& 1- \frac{|\lambda(1,p)|^2}{p^k(1+\frac{1}{p})}+ \frac{\overline{\chi}(p) \lambda(1,p)\lambda(1,p)-(p+1)p^{k-2}}
 %  {p^{k+1}(1+\frac{1}{p})}\\
 % =& 1-\frac{|\lambda(1,p)|^2}{p^k(1+\frac{1}{p})} + \frac{|\lambda(1,p)|^2}{p^{k+1}(1+\frac{1}{p})} -\frac{1}{p^2}\\
 % =&(1-\frac{1}{p^2}) -\frac{|\lambda(1,p)|^2}{p^k(1+\frac{1}{p})} (1-\frac{1}{p})\\
 % =& (1-\frac{1}{p^2}) (1-\frac{|\lambda(1,p)|^2}{p^k(1+\frac{1}{p})^2}),
 \end{align*}
Using again $\lambda(1,p^2) = \lambda(1,p)^2-(p+1)p^{k-2}\chi(p)$ and
$\chi(p) \overline{\lambda(1,p)}=\lambda(1,p)$ we obtain the assertion.
\end{proof}
\section{Half integral weights}
For positive integers $\kappa,N$ we denote by $M_{k}(4N, \chi)$
the space of holomorphic modular forms of weight $k=\kappa+\frac{1}{2}$ and
character $\chi$ for the group $\Gamma_0(4N)$. For the relevant
definitions and notations see \cite{shimura_halfintegral}.
In particular, we denote by ${\mathfrak G}$ the covering group of
$GL_2^+(\R)$ defined there and by $\gamma \mapsto \gamma^*$ the
embedding of $\Gamma_0(4)$ into ${\mathfrak G}$ with image
$\Delta_0(4)$. We can extend this embedding by putting $\bigl(
\begin{smallmatrix}
  1&0\\0&m^2
\end{smallmatrix}\bigr)^*=\bigl(\bigl(
\begin{smallmatrix}
  1&0\\0&m^2
\end{smallmatrix}\bigr), m^{\frac{1}{2}}\bigr)$ and 
 $\bigl(
\begin{smallmatrix}
  m^2&0\\0&1
\end{smallmatrix}\bigr)^*=\bigl(\bigl(
\begin{smallmatrix}
  m^2&0\\0&1
\end{smallmatrix}\bigr), m^{-\frac{1}{2}}\bigr)$ and $(\gamma_1 \alpha
  \gamma_2)^*=\gamma_1^* \alpha^* \gamma_2^*$ for $\gamma_1,\gamma_2
  \in \Gamma_0(4)$ and $\alpha$ one of the above matrices of
  determinant $m^2$ . In the sequel we will omit the superscript $*$
  if this can cause no confusion.

We also use the
action of double cosets  of integral matrices of non zero square
determinant  on half integral weight modular forms of level
$4N$ as defined there. In particular we
have associated to the double coset with respect to $\Delta_0(4N)$ of $\bigl(\bigl(
\begin{smallmatrix}
 1&0\\0&m^2
\end{smallmatrix}\bigr),m^{\frac{1}{2}} \bigr)$
the Hecke
operators $T_{4N}(1,m^2)$ which for  $m\mid 4N$ coincide with the the operators $U(m^2) $
sending $\sum_n a_f(n)e(nz)$  to $\sum_n a_f(nm^2)e(nz)$. By
considering a modular form of level $4N$ as a form of level
$\lcm(m,4N)$ we can let $U(m^2)$ act on forms of any level divisible
by $4$. 
The operator $T^*_{4N}(m^2,1)$ associated to the double coset of  $\bigl(
\begin{smallmatrix}
  m^2&0\\0&1
\end{smallmatrix}\bigr)^*$ is adjoint to $T_{4N}(1,m^2)$ with respect
to the Petersson product and coincides with it if one has
$\gcd(m,4N)=1$, we write then as usual  $T_{4N}(m^2)$.
For $N$ dividing $M$ we have as in the integral weight case a trace
operator $\tr^M_N$ from  $M_{k}(4M, \chi)$ to $M_{k}(4N, \chi)$
sending cusp forms to cusp forms and satisfying for cusp forms $f,g$
\begin{equation*} 
 \langle f,g \rangle = \langle f,g~|_k ~{\rm tr}_N^M \rangle,
 \end{equation*}
where the Petersson product on the left hand side is with respect to
$\Gamma_0(M)$ and that on the right hand side is with respect to $\Gamma_0(N)$.

In the theory of half integral weight modular forms there are two
different methods used for the definition of oldforms,  namely using
the operator $V_{d^2}$ as in the integral weight case (but with square
determinant), raising the level by a factor  $d^2$, and using the
operator $U(p^2)$ for a prime not dividing the level, raising the
level by a factor  $p$.
We start with the first method.
\begin{proposition}
\label{trace-hecke_halfintegral} 
Let $k=\kappa+\frac{1}{2}$ be half integral,
let $f \in S_k(\Gamma_0(N),\chi)$ and $d \in \mathbb N$. Then
 \begin{equation*} 
  (\Gamma_0(N):\Gamma_0(Nd^2))(f|_k V_{d^2})~|_k~{\rm tr}_N^{Nd^2} = \frac{1}{d^{2(k-1)}} f|_k T_N^{\ast}(d^2,1).
 \end{equation*}  
In particular, if $p$ is a prime with $p\nmid 4N$ and $f$ is an
eigenform of the Hecke operator $T(p^2)$ with eigenvalue $\lambda_p$,
we have 
 \begin{equation*} 
 (p^2+p)(f|_k V_{p^2})~|_k~{\rm tr}_N^{Np^2} =
  \frac{ \lambda_p}{p^{2(k-1)}} f
 \end{equation*}  
and 
\begin{equation*}
  \langle f, f|_k V_{p^2} \rangle =\frac{
    \lambda_p}{(p^2+p)p^{2(k-1)}}\langle f,f\rangle.
\end{equation*}

\end{proposition}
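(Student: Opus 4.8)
The plan is to transcribe the proof of Lemma~\ref{trace-hecke} into the metaplectic setting and then pair the resulting identity with $f$ via the trace adjointness recorded just above the proposition. First I would fix coset representatives $\alpha_i$ with $\Gamma_0(N)=\coprod_i\Gamma_0(Nd^2)\alpha_i$ (here, as throughout the discussion of the half integral trace operator, $\Gamma_0(N)$ and $\Gamma_0(Nd^2)$ stand for $\Gamma_0(4N)$ and $\Gamma_0(4Nd^2)$) and lift them through $\gamma\mapsto\gamma^*$ into $\Delta_0(4N)\subseteq\mathfrak G$. Exactly as in the proof of Prop.~3.1 of \cite{shimura_book}, carried over to $\mathfrak G$ as in Lemma~\ref{trace-hecke}, the relation $\delta_{d^2}^{-1}\Delta_0(4N)\delta_{d^2}\cap\Delta_0(4N)=\Delta_0(4Nd^2)$ (writing $\delta_{d^2}$ also for its $*$-lift) gives the disjoint decomposition $\Delta_0(4N)\,\delta_{d^2}\,\Delta_0(4N)=\coprod_i\Delta_0(4N)\,\delta_{d^2}\alpha_i$ of the double coset defining $T_N^{\ast}(d^2,1)$, and hence
\begin{equation*}
f|_k T_N^{\ast}(d^2,1)=d^{2(k-1)}\sum_i\overline{\chi(\alpha_i)}\,(f|_k V_{d^2})|_k\alpha_i=(\Gamma_0(N):\Gamma_0(Nd^2))\,d^{2(k-1)}\,(f|_k V_{d^2})|_k\tr_N^{Nd^2},
\end{equation*}
where the last equality is just the definition of $\tr_N^{Nd^2}$. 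Dividing through by $(\Gamma_0(N):\Gamma_0(Nd^2))\,d^{2(k-1)}$ gives the first displayed formula.

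The only step that is not purely formal is the first equality in the display above: one must check that in Shimura's normalization of the action of the double coset of $\bigl(\begin{smallmatrix}d^2&0\\0&1\end{smallmatrix}\bigr)^*$ on half integral weight forms, the scalar relating $f|_k T_N^{\ast}(d^2,1)$ to the naive sum $\sum_i\overline{\chi(\alpha_i)}(f|_k V_{d^2})|_k\alpha_i$ is precisely $(\Gamma_0(N):\Gamma_0(Nd^2))\,d^{2(k-1)}$. This is the half integral analogue of the identity $f|_k T_N^{\ast}(d,1)=d^{k-1}\sum_i\overline{\chi(\alpha_i)}(f|_k V_d)|_k\alpha_i$ used in Lemma~\ref{trace-hecke}; the extra factor $d^{-1/2}$ attached to $\bigl(\begin{smallmatrix}d^2&0\\0&1\end{smallmatrix}\bigr)$ in its $*$-extension is exactly what turns the exponent $k-1$ into $2(k-1)$, and the compatibility $(\alpha\gamma)^*=\alpha^*\gamma^*$ fixed in the text is what makes the coset decomposition lift cleanly. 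I would verify this against the conventions of \cite{shimura_halfintegral}, and I expect this bookkeeping to be the main (and essentially the only) obstacle.

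For the ``in particular'' assertion I would specialize to $d=p$ with $p\nmid 4N$. The index formula $(SL_2(\Z):\Gamma_0(M))=M\prod_{q\mid M}(1+\tfrac1q)$ gives $(\Gamma_0(N):\Gamma_0(Np^2))=p^2(1+\tfrac1p)=p^2+p$ because $p\nmid 4N$, and since $\gcd(p,4N)=1$ the operator $T_N^{\ast}(p^2,1)$ coincides with the Hecke operator $T(p^2)$ of which $f$ is assumed an eigenform, so $f|_k T_N^{\ast}(p^2,1)=\lambda_p f$. Substituting into the first identity yields
\begin{equation*}
(p^2+p)\,(f|_k V_{p^2})|_k\tr_N^{Np^2}=\frac{\lambda_p}{p^{2(k-1)}}\,f .
\end{equation*}
Finally, applying the trace adjointness $\langle f,g\rangle=\langle f,g|_k\tr_N^{M}\rangle$ stated above the proposition with $M=Np^2$ and $g=f|_k V_{p^2}$, together with linearity of the Petersson product, gives
\begin{equation*}
\langle f,f|_k V_{p^2}\rangle=\big\langle f,(f|_k V_{p^2})|_k\tr_N^{Np^2}\big\rangle=\frac{\lambda_p}{(p^2+p)\,p^{2(k-1)}}\,\langle f,f\rangle,
\end{equation*}
which is the second displayed formula. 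Everything after the normalization check of the previous paragraph is formal manipulation of the operators $V_\ell$, $\tr$ and $T^{\ast}$ already available from the integral weight treatment.
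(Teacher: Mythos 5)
Your proof is correct and follows the paper's approach exactly: the paper's own proof of this proposition is a one-line reduction to Lemma~\ref{trace-hecke} (``proven in the same way\dots''), and your transcription of that argument into the metaplectic setting, followed by the routine specialization $d=p$, the index computation $(\Gamma_0(N):\Gamma_0(Np^2))=p^2+p$, and the trace adjointness for the ``in particular'' claims, is precisely what is intended. One small quibble on the normalization check: the exponent $2(k-1)$ arises simply because $\det\delta_{d^2}=d^2$ replaces $d$ in the factor $(\det)^{k/2-1}(\det)^{k/2}$, while the $d^{-1/2}$ in the $*$-lift serves rather to guarantee the compatibility $f|_kV_{d^2}=(d^2)^{-k/2}f|_k\delta_{d^2}^*$, so your bookkeeping lands on the right constant even though the attribution is slightly off.
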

\begin{proof}
  This is proven in the same way as Lemma \ref{trace-hecke}. Notice
  that in the case of half integral weight we can only use shift
  operators $V_{d^2}$ and Hecke operators $T_N^{\ast}(d^2,1)$ with
  squares $d^2$.
\end{proof}
\begin{proposition}
\label{trace-hecke_halfintegral} 
Let $k=\kappa+\frac{1}{2}$ be half integral,
let $f \in S_k(\Gamma_0(N),\chi)$ and $p\nmid 4N$ be a prime.

Then 
\begin{equation*}
  f \mid_k U(p^2)|_k \tr^{Np}_N=p^2 f|_kT(p^2).
\end{equation*}
In particular, if $f$ is an
eigenform of the Hecke operator $T(p^2)$ with eigenvalue $\lambda_p$,
we have 
 \begin{equation*}
\langle f,f|U(p^2) \rangle =p^2 \lambda_p \langle f, f\rangle.
 \end{equation*}  
\end{proposition}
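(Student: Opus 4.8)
The plan is to mimic the proof of Lemma~\ref{trace-hecke} in Shimura's double-coset formalism on the covering group $\mathfrak G$. The key observation is that, on forms of level $4Np$ (into which we first embed $f$), the operator $U(p^2)$ is the Hecke operator $T_{4Np}(1,p^2)$ attached to the double coset of $\bigl(\begin{smallmatrix}1&0\\0&p^2\end{smallmatrix}\bigr)^*$ modulo $\Delta_0(4Np)$, whereas $T(p^2)=T_{4N}(p^2)=T^*_{4N}(p^2,1)$ is attached to the double coset of the \emph{same} element modulo the larger group $\Delta_0(4N)$; composing $U(p^2)$ with the trace $\tr^{Np}_N$ down to level $4N$ will therefore reassemble, up to an explicit scalar, the level-$4N$ double coset operator. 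This is exactly the half-integral counterpart of the elementary fact recalled in the introduction that the $p$-th Hecke operator is produced by a level-raising operator followed by a trace. Since the asserted formula is linear in $f$ it is an identity of operators on $S_k(\Gamma_0(4N),\chi)$, and the eigenform statement follows from it immediately.

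To carry this out, write $\Gamma_0(4N)=\bigcup_i\Gamma_0(4Np)\gamma_i$ as a disjoint union of $p+1$ cosets (the index is $p+1$ because $p\nmid 4N$), so that $g|_k\tr^{Np}_N=\frac{1}{p+1}\sum_i\overline{\chi(\gamma_i)}\,g|_k\gamma_i^*$, and expand $f|_kU(p^2)=p^{k-2}\sum_{j\bmod p^2}f|_k\xi_j$ with $\xi_j:=\bigl(\begin{smallmatrix}1&0\\0&p^2\end{smallmatrix}\bigr)^*\bigl(\begin{smallmatrix}1&j\\0&1\end{smallmatrix}\bigr)^*$, the $\xi_j$ being representatives of the right cosets of the double coset defining $U(p^2)$ at level $4Np$. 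Composing and using $(\gamma_1\alpha\gamma_2)^*=\gamma_1^*\alpha^*\gamma_2^*$ gives
\[
f|_kU(p^2)|_k\tr^{Np}_N=\frac{p^{k-2}}{p+1}\sum_{i,j}\overline{\chi(\gamma_i)}\;f|_k\Bigl(\bigl(\begin{smallmatrix}1&0\\0&p^2\end{smallmatrix}\bigr)^*\delta_{ij}^*\Bigr),\qquad \delta_{ij}:=\bigl(\begin{smallmatrix}1&j\\0&1\end{smallmatrix}\bigr)\gamma_i\in\Gamma_0(4N),
\]
so that every summand is a slash by an element of the double coset $\Delta_0(4N)\bigl(\begin{smallmatrix}1&0\\0&p^2\end{smallmatrix}\bigr)^*\Delta_0(4N)$ that defines $T_{4N}(p^2)=T^*_{4N}(p^2,1)$. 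A counting argument, again using $p\nmid 4N$, shows that as $(i,j)$ runs over the $p^2(p+1)$ pairs the classes $\Delta_0(4N)\bigl(\begin{smallmatrix}1&0\\0&p^2\end{smallmatrix}\bigr)^*\delta_{ij}^*$ run through the right cosets of this double coset with the appropriate multiplicities; absorbing the characters $\overline{\chi(\gamma_i)}$ into the character of $T_{4N}(p^2)$ and keeping track of the covering-group automorphy factors, the sum collapses to $p^2\,f|_kT(p^2)$.

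For the final assertion apply the half-integral weight version of Lemma~\ref{trace-skp} (recorded just before this proposition) with $g=f|_kU(p^2)\in S_k(\Gamma_0(4Np),\chi)$:
\[
\langle f,f|U(p^2)\rangle=\bigl\langle f,\,(f|_kU(p^2))|_k\tr^{Np}_N\bigr\rangle=p^2\,\langle f,f|_kT(p^2)\rangle=p^2\,\overline{\lambda_p}\,\langle f,f\rangle,
\]
and observe that $\overline{\lambda_p}=\lambda_p$: for $p\nmid 4N$ the operator $T(p^2)=T_{4N}(p^2)$ equals its own Petersson adjoint $T^*_{4N}(p^2,1)$ (as recalled just above), hence is self-adjoint for the positive definite Petersson product and so has real eigenvalues.

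The main obstacle is the coset bookkeeping of the second step: one has to choose compatible sets of right-coset representatives for the two double cosets, check that the products $\bigl(\begin{smallmatrix}1&0\\0&p^2\end{smallmatrix}\bigr)^*\delta_{ij}^*$ reproduce, with the correct multiplicities and character weights, all the right cosets of the level-$4N$ double coset, and --- most delicately --- follow the half-integral automorphy factors of the $\xi_j$ through the composition so as to pin down the exact constant $p^2$ (rather than a factor depending on the weight, as in the $V_{d^2}$-version of the previous proposition; the difference is caused by the fact that $U(p^2)$ is normalized as a coefficient operator). Apart from this the argument is a transcription of the proof of Lemma~\ref{trace-hecke} into the covering-group setting, together with the statements on adjointness already quoted in the text.
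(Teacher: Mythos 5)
Your argument is correct and follows essentially the same route as the paper: expand $f|_kU(p^2)$ as $(p^2)^{k/2-1}\sum_b f|_k\alpha_b^*$ over the $p^2$ right cosets, compose with coset representatives for $\Gamma_0(4Np)\backslash\Gamma_0(4N)$, and reassemble the resulting $p^2(p+1)$ terms into the level-$4N$ double coset defining $T(p^2)$ --- the counting step you single out as the remaining bookkeeping is exactly the point the paper also leaves to a citation of Section 3.1 of Shimura's book --- before invoking the half-integral version of Lemma \ref{trace-skp}. Your explicit justification that $\lambda_p$ is real (self-adjointness of $T(p^2)$ for $p\nmid 4N$), which is needed to replace $\overline{\lambda_p}$ by $\lambda_p$ in the final identity, is a point the paper leaves implicit.
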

\begin{proof}
 With $\alpha_b=\bigl(
 \begin{smallmatrix}
   1&b\\0&p^2
 \end{smallmatrix}\bigr)$ we have (see \cite{shimura_halfintegral})
 \begin{equation*}
   f|_kU(p^2)=f|_k\Gamma_0(4N)\alpha_0\Gamma_0(4Np)=(p^2)^{\frac{k}{2}-1}\sum_{b=0}^{p^2-1}f|_k\alpha_b^*.
 \end{equation*}
Moreover, we have
$\Gamma_0(4N)\alpha_0\Gamma_0(4Np)=\cup_b\Gamma_0(4N)\alpha_b$, and by
Section 3.1 of \cite{shimura_book},
$\Gamma_0(4N)\alpha_0\Gamma_0(4Np)\Gamma_0(4Np)1_2\Gamma_0(4N)=(p+1)p^2\Gamma_0(4N)\alpha_0\Gamma_0(4N)$.
From this the first assertion follows, and the second one follows in
the same way as in the integral weight case, using Lemma
\ref{trace-skp}, which is valid for half integral weight too. 
\end{proof}
As mentioned in the introduction, because of the lack of a
satisfactory theory of oldforms and newforms in the half integral
weight case we finish the investigation of this case here without
trying to find good orthogonal bases for the space of all cusp forms.
 \section{Fourier coefficients of cusp forms}
For the rest of this paper we concentrate again on the case of modular
forms of integral weight $k$. 
 \begin{theorem}
The space $S_k(\Gamma_0(M),\chi)$ has an orthonormal basis $(h_1,\ldots,h_d)$, where each 
$h_i$ is an eigenform of all Hecke operators $T(p)$ for $p\nmid M$ and where the Fourier coefficients
$a(h_i,n)$ satisfy
 \begin{equation}\label{on_estimate}
|a(h_i,n)| \leq 2 \sqrt{\pi} e^{2\pi}\sigma_0(n) n^{\frac{k-1}{2}} \cdot M^{\frac{1}{2}}\cdot \prod_{p|M} \frac{(1+\frac{1}{p})^3}{\sqrt{1-\frac{1}{p^4}}}.
 \end{equation}
 \end{theorem}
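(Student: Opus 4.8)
The target estimate decomposes naturally along the orthonormal basis built from the previous sections: I would take each $h_i$ to be a suitably normalized element of the Gram--Schmidt basis from Theorem~\ref{ogbasis_prime}, namely $h_i = g_J / \sqrt{\langle g_J, g_J\rangle}$ for an appropriate multi-index $J = (j_1,\dots,j_z)$ attached to a primitive form $f$ of level $N$ with $\ell N \mid M$. Because the $g_J$ are tensor products over the primes $p_\nu \mid M/N$ of the prime-power objects $g_{j_\nu}$ from parts a) and b), the Fourier coefficient $a(h_i,n)$ is a finite linear combination of the coefficients $a(f, n/\ell')$ of the underlying primitive form, with at most $3^z \le 3^{\omega(M)}$ terms and with coefficients that I can read off explicitly from the displayed formulas for the $g_j$. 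So the proof splits into: (i) bound the primitive-form coefficients via Deligne, (ii) bound the explicit combinatorial coefficients appearing in $g_J$, (iii) bound the normalizing factors $\langle g_J,g_J\rangle^{-1/2}$ from below-estimates on $\langle g_j,g_j\rangle$, and (iv) collect everything into the stated product over $p\mid M$.

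For step (i), Deligne's bound gives $|a(f,n)| \le \sigma_0(n)\, n^{(k-1)/2}$ for a primitive form $f$; after translating by $V_{\ell'}$ this only shifts the index, and $\sigma_0$ is submultiplicative enough that the divisor-count stays controlled by $\sigma_0(n)$. For step (ii), in part a) the coefficient of $\tilde f|_k V_{p^{j-1}}$ inside $g_j$ is $\overline{\lambda(1,p)}/p^k$, and $|\lambda(1,p)| \le 2 p^{(k-1)/2}$ for $p\mid N$ (in fact $|\lambda(1,p)| = p^{(k-2)/2}$ or $0$ in the genuinely ramified cases, but $2p^{(k-1)/2}$ is a safe universal bound), together with the powers $p^{jk/2}$ multiplying the whole thing; in part b) there is additionally the term $\overline{\chi(p)}/p^{k+1}$ with $|\chi(p)|\le 1$, and the $g_1$ coefficient $\overline{\lambda(1,p)}/(p^{k/2}(1+1/p))$. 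Multiplying out the tensor product, each monomial contributes a bounded constant times a power of $p^{k/2}$ against a Fourier index divided by the corresponding power of $p$, so the $n^{(k-1)/2}$ and $\sigma_0(n)$ factor out cleanly and what remains is a product over $p\mid M$ of absolute constants of the shape $(1+1/p)^{a}$ and $p^{b}$ with small exponents.

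For step (iii) I need a lower bound on $\langle g_j,g_j\rangle$ uniform in $j$. From Theorem~\ref{ogbasis_prime} these are $1 - |\lambda(1,p)|^2/p^k$ (case a), $1 - |\lambda(1,p)|^2/(p^k(1+1/p)^2)$ (case b, $j=1$), and $(1-1/p^2)(1-|\lambda(1,p)|^2/(p^k(1+1/p)^2))$ (case b, $j\ge 2$). Using $|\lambda(1,p)|^2 \le 4p^{k-1}$ at ramified primes and $|\lambda(1,p)|^2 \le 4p^{k-1}$ (Deligne) at unramified primes, one checks $|\lambda(1,p)|^2/(p^k(1+1/p)^2) \le 4/(p(1+1/p)^2) = 4p/(p+1)^2$, which is $\le 1$ only for $p\ge 7$ — so here the hard part is the small primes $p\in\{2,3,5\}$, where a cruder or case-specific argument is needed (one can instead bound $\langle g_j,g_j\rangle$ below by, e.g., its exact value which is automatically positive since the $g_j$ are nonzero, and extract a clean denominator $\sqrt{1-1/p^4}$ and $(1+1/p)$-powers by a slightly lossy but uniform estimate). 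I expect \textbf{this normalization step to be the main obstacle}: getting a lower bound on $\langle g_j, g_j\rangle$ that is both uniform in $j$ and clean enough to produce exactly the factor $\big(1-\tfrac{1}{p^4}\big)^{-1/2}$ in the statement, valid for all $p$ including $p=2$, is the delicate point.

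Finally, for step (iv) — the archimedean factor $2\sqrt\pi e^{2\pi}$ and the $M^{1/2}$ — I would invoke the standard trivial bound for the Fourier coefficient of a cusp form in terms of its Petersson norm: for $h \in S_k(\Gamma_0(M),\chi)$ with $\langle h,h\rangle = 1$ one has $|a(h,n)| \ll_k n^{(k-1)/2}[SL_2(\Z):\Gamma_0(M)]^{1/2}$, where $[SL_2(\Z):\Gamma_0(M)] = M\prod_{p\mid M}(1+1/p) \le M\prod_{p\mid M}(1+1/p)$, giving the $M^{1/2}$ and one more $(1+1/p)^{1/2}$ per prime; the explicit constant $2\sqrt\pi e^{2\pi}$ comes from making the implied constant in that classical estimate explicit (integrating $|h(z)|^2 y^k$ over a fundamental domain, using $y\ge \sqrt3/2$ at the cusp and the Gaussian decay of $e^{-4\pi n y}$, optimized at $y = (k-1)/(4\pi n)$ or simply evaluated at a fixed height). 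Combining the product of all per-prime factors from steps (ii), (iii), (iv) yields $\prod_{p\mid M}(1+1/p)^3/\sqrt{1-1/p^4}$ as claimed; the only thing to watch is bookkeeping the three powers of $(1+1/p)$ — one from the index $[SL_2(\Z):\Gamma_0(M)]^{1/2}$ and two from the $g_1$/$g_j$ denominators and coefficient sizes in case b) — so that no prime contributes more than the stated exponent.
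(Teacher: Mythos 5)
Your steps (i)--(iii) track the paper's argument: expand each basis element $\prod_p \phi_{p,j_p}(\tilde f)$ as an explicit linear combination of translates of the underlying primitive form $f$, apply Deligne's bound to $a(f,\cdot)$ and the eigenvalue bounds to the combinatorial coefficients, and divide by the norms computed in Theorem \ref{ogbasis_prime}. One correction to your step (iii): since $(p+1)^2-4p=(p-1)^2\ge 0$, the inequality $4p/(p+1)^2\le 1$ holds for \emph{every} prime, not only for $p\ge 7$; the norms are uniformly bounded below by $(1-\frac{1}{p^2})\,(p-1)^2/(p+1)^2>0$, so the ``main obstacle'' you flag at small primes does not exist, and falling back on ``the exact value is positive'' would in any case give no quantitative bound.

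The genuine gap is in your step (iv). The constant $2\sqrt{\pi}e^{2\pi}$ and the factor $M^{1/2}$ do not come from the classical estimate $|a(h,n)|\ll_k n^{(k-1)/2}[SL_2(\Z):\Gamma_0(M)]^{1/2}$ for an $L^2$-normalized form $h$: that estimate carries a weight-dependent constant (a power of $4\pi n$ against $\Gamma(k-1)^{1/2}$) and produces neither the weight-independent constant nor the factor $\sigma_0(n)$ appearing in the statement. What is actually needed, and what your outline never supplies, is a \emph{lower} bound for $\langle f,f\rangle$ of the primitive form: the $g_j$ are built from $\tilde f=f/\sqrt{\langle f,f\rangle}$ while Deligne controls $a(f,n)$ and not $a(\tilde f,n)$, so every coefficient estimate acquires a factor $\langle f,f\rangle^{-1/2}$ that must be bounded from above. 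The paper does this with $a(f,1)=1$ and the trivial estimate $\int_{\mathcal F}|f(x+iy)|^2y^{k-2}\,dx\,dy\ge\int_1^{\infty}e^{-4\pi y}\,dy$, yielding $\langle f,f\rangle\ge\bigl(4\pi e^{4\pi}N_f\prod_{p\mid N_f}(1+\frac{1}{p})\bigr)^{-1}$; this is the sole source of $2\sqrt{\pi}e^{2\pi}$, of one power of $(1+\frac{1}{p})$, and of $N_f^{1/2}$, which combines with the leftover factors $p^{j_p/2}$ from the shifts $V_{p^{j_p}}$ (using $N_f\prod_p p^{j_p}\mid M$) to give $M^{1/2}$ --- not the index $[SL_2(\Z):\Gamma_0(M)]$ as you propose. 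Without this ingredient the Deligne bound on $f$ cannot be converted into a bound on the orthonormal $h_i$, and your argument does not close.
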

 
 \begin{proof}
We write $g_j = \phi_{p,j}(\tilde{f})$ for the basis vectors $g_j \in W_p(f)$ constructed in Theorem
\ref{ogbasis_prime}, where $f\in S_k(\Gamma_0(M),\chi)$ is a newform
of level $N_f$ with  $p^jN\mid M$ and $\tilde{f}=\frac{f}{\sqrt{\langle
    f, f\rangle}}$. 
Obviously,
the operators $\phi_{p,j_p}$, defined for an arbitrary modular form by
the right hand sides of
(\ref{basisvectors1}), (\ref{basisvectors2}),  for distinct primes $p$ commute, and  as noticed
after Lemma \ref{product_decomposition-Lemma} the space
$S_k(\Gamma_0(M),\chi)$ 
has then an orthogonal basis consisting of the $(\prod_{p|M} \phi_{p,j_p})(\tilde{f})$, where $f$ runs over the primitive
forms of levels $N_f\mid M$ in $S_k(\Gamma_0(M),\chi)$ and $j_p \geq
0$ over the integers satisfying $N_fp^{j_p}\mid M$.
 \medskip

Examining the Proof of Theorem \ref{ogbasis_prime} we see that the  Petersson norm of $(\prod_{i} \phi_{p_i,j_{p_i}})
(\tilde{f})$ is equal to the product over $i$ of the norms of the $\phi_{p_i,j_{p_i}}(\tilde{f})$, which were computed in that 
theorem.
 \medskip

Analogously, we can decompose the computation of a bound for the Fourier coefficients of $(\prod_{i} \phi_{p_i,j_{p_i}})
(\tilde{f})$ into the computation of such a bound for each $\phi_{p_i,j_{p_i}}(\tilde{f})$. Looking at the $g_j$
again, we have for $p|N_f$ (using $|a(f,n)| \leq
\sigma_0(n)n^{\frac{(k-1)}{2}}$ and $\vert \lambda(1,p)\vert \le
p^{\frac{k-1}{2}}$ for primitive forms $f$ and $p\mid N_f$) 
 \begin{align*}
 \langle f,f \rangle^{\frac{1}{2}} |a(g_0,n)| \leq &\sigma_0(n) n^{\frac{k-1}{2}} \mbox{ and}\\
 \langle f,f \rangle^{\frac{1}{2}} |a(g_j,n)| \leq & p^{\frac{jk}{2}} \sigma_0(\frac{n}{p^j})(\frac{n}{p^j})^{\frac{k-1}{2}}\\
   &  +p^{\frac{jk}{2}}  p^{-\frac{(k+1)}{2}} \sigma_0(\frac{n}{p^{j-1}})(\frac{n}{p^{j-1}})^{\frac{k-1}{2}}
 \end{align*}
for $j \geq 1$, where the terms involving
$\frac{n}{p^j},\frac{n}{p^{j-1}}$ appear only if the respective
quotient is integral.  
This gives  $\langle f,f \rangle^{\frac{1}{2}} |a(g_j,n)| \leq
\sigma_0(n) n^{\frac{k-1}{2}} p^{\frac{j}{2}} (1+\frac{1}{p}) $ for
$j\ge 1$, and we see that this estimate holds indeed for all $j$.
\medskip

For $p\nmid N$ we obtain (with $|\lambda(1,p)| \leq
2p^{\frac{k-1}{2}}$ for $p\nmid N_f$):
 \begin{align*}
\langle f,f \rangle^{\frac{1}{2}} |a(g_0,n)| \leq & \sigma_0(n) n^{\frac{k-1}{2}}\\
\langle f,f \rangle^{\frac{1}{2}} |a(g_1,n)| \leq & p^{\frac{k}{2}} \sigma_0(\frac{n}{p})(\frac{n}{p})^{\frac{k-1}{2}}\\
  & + 2\sigma_0(n)n^{\frac{k-1}{2}} \cdot \frac{p^{\frac{k-1}{2}}}{p^{\frac{k}{2}}(1+\frac{1}{p})}\\
  \leq & \sigma_0(n) n^{\frac{k-1}{2}} p^{\frac{1}{2}}(1+\frac{2}{p(1+\frac{1}{p})})
 \end{align*}
and for $ j \geq 2$
 \begin{align*}
 \langle f,f\rangle^{\frac{1}{2}} |a(g_j,n)| \leq & p^{\frac{jk}{2}} (\sigma_0(\frac{n}{p^j})(\frac{n}{p^j})^{\frac{k-1}{2}} +
 2 \cdot \frac{p^{\frac{k-1}{2}}}{p^k} \cdot \sigma_0(\frac{n}{p^{j-1}})(\frac{n}{p^{j-1}})^{\frac{k-1}{2}}\\
 & +\frac{1}{p^{k+1}} \sigma_0( \frac{n}{p^{j-2}})(\frac{n}{p^{j-2}})^{\frac{k-1}{2}})\\
 \leq & \sigma_0(n) n^{\frac{k-1}{2}} p^{\frac{j}{2}} (1+\frac{1}{p})^2,
 \end{align*}
and we see that the latter bound holds for all $j$.
 \medskip

Finally, to estimate  $\langle f,f \rangle $ for the primitive form
$f$ from below we choose the fundamental domain
${\mathcal F}$ so that it contains $\{x+iy \in H\mid \vert x \vert
<\frac{1}{2}, y>1\}$, use $a(f,1)=1$ and get as in 
\cite{fomenko1} 
 \begin{equation*}
 \langle f,f \rangle \geq (  4\pi e^{4\pi}N_f \cdot \prod_{p|N_f} (1+\frac{1}{p}))^{-1} 
 \end{equation*}
from the trivial bound 
$\int_{\mathcal F}\vert f(x+iy)\vert^2 y^{k-2}dxdy\ge \int_1^\infty
\exp(-4\pi y)dy$.

Improvements on this are possible by \cite{Go-Ho-Li, Ho-Lo} but have been made effective so
far only in few cases, see \cite{rouse}. 
At least if the conductor $M_\chi$ of the character $\chi$ is small
compared to $M$
these don't give much for our present purpose because of the 
additional factors coming from oldforms which we computed above. 

\medskip
Putting things together and comparing the bounds in the cases $p\mid N$ and
$p \nmid N$ , we arrive for $h$ equal to the quotient of one of the
$\prod_{p|M} \phi_{p,j_p}(\tilde{f})$ by its Petersson norm at the
common bound
 \begin{equation*}
 |a(h,n)| \leq 2\sqrt{\pi} e^{2 \pi} \sigma_0(n) n^{\frac{k-1}{2}} M^{\frac{1}{2}} \prod_{p|M} \frac{(1+\frac{1}{p})^3}
 {\sqrt{1-\frac{1}{p^4}}}
 \end{equation*}
for both cases as asserted.
 \end{proof}

 \begin{theorem}\label{fourier_estimate}
Let $F\in S_k(\Gamma_0(M),\chi)$. Then the Fourier coefficients $a(F,n)$ satisfy
 \begin{equation}
 |a(F,n)| \leq 2\sqrt{\pi} e^{2 \pi}\sqrt{\langle F,F \rangle} \cdot (\dim S_k(\Gamma_0(M),\chi))^{\frac{1}{2}} \cdot \sigma_0(n) n^{\frac{k-1}{2}}
  M^{\frac{1}{2}} \cdot \prod_{p|M} \frac{(1+\frac{1}{p})^3}{\sqrt{1-\frac{1}{p^4}}}.
 \end{equation}
 \end{theorem}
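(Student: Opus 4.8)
The plan is to derive Theorem \ref{fourier_estimate} from the previous theorem by a simple Cauchy--Schwarz argument in the finite-dimensional Hilbert space $S_k(\Gamma_0(M),\chi)$. Let $(h_1,\ldots,h_d)$ be the orthonormal basis produced in the previous theorem, where $d=\dim S_k(\Gamma_0(M),\chi)$, so that each $h_i$ satisfies
\begin{equation*}
|a(h_i,n)| \leq 2\sqrt{\pi}\, e^{2\pi}\,\sigma_0(n)\, n^{\frac{k-1}{2}}\, M^{\frac{1}{2}} \prod_{p\mid M} \frac{(1+\frac{1}{p})^3}{\sqrt{1-\frac{1}{p^4}}} =: B(n).
\end{equation*}
Given $F\in S_k(\Gamma_0(M),\chi)$, write $F=\sum_{i=1}^d c_i h_i$ with $c_i=\langle F,h_i\rangle$, so that $\langle F,F\rangle = \sum_{i=1}^d |c_i|^2$ by orthonormality.

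Since the $n$-th Fourier coefficient is a linear functional, $a(F,n)=\sum_{i=1}^d c_i\, a(h_i,n)$. Applying the triangle inequality and then the Cauchy--Schwarz inequality on $\C^d$,
\begin{equation*}
|a(F,n)| \leq \sum_{i=1}^d |c_i|\,|a(h_i,n)| \leq \Bigl(\sum_{i=1}^d |c_i|^2\Bigr)^{\frac{1}{2}} \Bigl(\sum_{i=1}^d |a(h_i,n)|^2\Bigr)^{\frac{1}{2}} \leq \sqrt{\langle F,F\rangle}\cdot \sqrt{d}\cdot B(n),
\end{equation*}
where in the last step I bound each $|a(h_i,n)|\le B(n)$ and there are $d$ terms in the sum. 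Substituting the explicit value of $B(n)$ and of $d=\dim S_k(\Gamma_0(M),\chi)$ gives exactly the asserted inequality.

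There is essentially no obstacle here: the substantive work was already done in the preceding theorem, which furnishes a single orthonormal basis whose members are \emph{simultaneously} Hecke eigenforms (away from $M$) and satisfy the uniform Fourier coefficient bound $B(n)$. The only point to be careful about is that the bound $B(n)$ is uniform over the basis vectors $h_i$ — which it is, since the right-hand side in the previous theorem does not depend on $i$ — so that the crude estimate $\sum_i |a(h_i,n)|^2 \le d\, B(n)^2$ is legitimate. One could note in passing that a sharper bound replacing $\sqrt{d}$ by $\bigl(\sum_i |a(h_i,n)|^2/B(n)^2\bigr)^{1/2}$ is available, but the stated form with $(\dim S_k(\Gamma_0(M),\chi))^{1/2}$ is clean and suffices for the intended quadratic-form applications.
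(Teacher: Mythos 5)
Your proposal is correct and is exactly the argument the paper intends: the paper's proof is the one-line remark that the theorem follows from the previous theorem by Cauchy--Schwarz, and you have simply written out that expansion in the orthonormal basis $(h_1,\ldots,h_d)$ explicitly. No difference in approach, and no gaps.
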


 \begin{proof}
This follows immediately from the previous theorem, using the
Cauchy-Schwarz inequality:  We write $F=\sum_{\nu=1}^dc_\nu h_\nu$,
where $d=\dim S_k(\Gamma_0(M),\chi)$ and where $h_\nu$ runs through
the orthonormal basis from the previous theorem. We have then $\langle
F,F \rangle=\sum_{\nu=1}^d|c_\nu|^2$ and
\begin{eqnarray}
  \label{proof_maintheorem}
  |a(F,n)|&=&|\sum_{\nu=1}^dc_\nu a(h_\nu,n)|\nonumber\\
          &=& |(c_1,\ldots,c_d) \cdot (a(h_1,n),\ldots,a(h_d,n)|\nonumber\\
  &\le&  |(c_1,\ldots,c_d)||(a(h_1,n),\ldots,a(h_d,n))|\nonumber\\
  &\le&\sqrt{\langle F,F\rangle}\max(|a(h_\nu,n)|) \sqrt{d}.
\end{eqnarray}
Inserting the bound (\ref{on_estimate}) for $\max(|a(h_\nu,n)|$ from
the previous theorem
we obtain the assertion.
 \end{proof}

 \begin{remark}
   \begin{enumerate}
\item As indicated above it should be possible to improve on the factor
  $M^{\frac{1}{2}}$ in the bound for $a(F,n)$ if the conductor $M_\chi$ of the character $\chi$ is equal to
  $M$ or at least relatively
  large compared to $M$ by using an effective
    version of the bound for the Petersson norm of a primitive form  from  \cite{Go-Ho-Li, Ho-Lo} .
   \item 
For $\gcd(n,M) = 1$ we obtain the better estimate
 \begin{equation*}
 |a(h_i,n)| \leq 2\sqrt{\pi} e^{2 \pi}\sigma_0(n) n^{\frac{k-1}{2}} \cdot M^{\frac{1}{2}} \prod_{p|M} \frac{(1+\frac{1}{p})}
  {\sqrt{1-\frac{1}{p^4}}}
 \end{equation*}
in Theorem \ref{fourier_estimate} and hence
 \begin{equation*}
|a(F,n)| \leq 2\sqrt{\pi} e^{2 \pi}\sqrt{\langle F,F \rangle} \cdot (\dim S_k(\Gamma_0(M),\chi))^{\frac{1}{2}} \cdot \sigma_0(n) n^{\frac{k-1}{2}}
  M^{\frac{1}{2}} \cdot \prod_{p|M} \frac{(1+\frac{1}{p})}{\sqrt{1-\frac{1}{p^4}}}.
 \end{equation*}
   \end{enumerate}
 \end{remark}
%\newpage

%\vspace{2cm}

 \bigskip

Rainer Schulze-Pillot, Abdullah Yenirce\\
Fachrichtung Mathematik,
Universit\"at des Saarlandes (Geb. E2.4)\\
Postfach 151150, 66041 Saarbr\"ucken, Germany\\
email: abdullahyenirce@googlemail.com,
schulzep@math.uni-sb.de
\end{document}